\normalfont\fontsize{12}{15}\bfseries}{\thesection}{1em.}{}
\newtheorem{proposition}{Proposition}[section]
\newtheorem{corollary}{Corollary}[section]
\newtheorem{lemma}{Lemma}[section]
\newtheorem{theorem}{Theorem}[section]
\let\oldbibliography\thebibliography
\renewcommand{\thebibliography}[1]{%
  \oldbibliography{#1}%
  \setlength{\itemsep}{-2pt}%
}
\begin{document}

\baselineskip=0.20in

\noindent
{\large \bf On the Hyperbolic Sombor Index and Its Counterpart}\\

\noindent
Abeer M. Albalahi$^1$,  Shibsankar Das$^{2}$, Akbar Ali$^{1}$, Jayjit Barman$^{2}$, Amjad E. Hamza$^1$\\

\noindent
\footnotesize $^1${\it Department of Mathematics,  College of Science, University of Ha\!'il, Ha\!'il, Saudi Arabia}\\
\noindent
 $^2${\it Department of Mathematics, Institute of Science, Banaras Hindu University, Varanasi-221005, Uttar Pradesh, India\/} \\

\setcounter{page}{1} \thispagestyle{empty}

\baselineskip=1.20in

\normalsize

 \begin{abstract}
 \noindent
For a graph $G$ with edge set $E$, let $d(w)$ denote the degree of a vertex $w$ in $G$.
The hyperbolic Sombor index of $G$ is defined by
$$HSO(G)=\sum_{uv\in E}(\min\{d(u),d(v)\})^{-1}\sqrt{(d(u))^2+(d(v))^2}.$$ If $\min\{d(u),d(v)\}$ is replaced with $\max\{d(u),d(v)\}$ in the formula of $HSO(G)$, then the complementary diminished Sombor (CDSO) index is obtained.
For two non-adjacent vertices $v$ and $w$ of $G$, the graph obtained from $G$ by adding the edge $vw$ is denoted by $G+vw$. In this paper, we attempt to correct some inaccuracies in the recent work
[J. Barman, S. Das, Geometric approach to degree-based topological index: hyperbolic Sombor index, {\it MATCH Commun. Math. Comput. Chem.} {\bf95} (2026) 63--94]. We establish a sufficient condition under which $HSO(G+vw) > HSO(G)$ holds,
and also provide a sufficient condition guaranteeing $HSO(G+vw) < HSO(G)$.
In addition, we give a lower bound on $HSO(G)$ in terms of the order and size of $G$. Furthermore, we obtain similar results for the CDSO index.\\[2mm]
 {\bf Keywords:} topological index; hyperbolic Sombor index; diminished Sombor index; extremal problem.\\[2mm]
 {\bf 2020 Mathematics Subject Classification:} 05C07, 05C09.
 \end{abstract}

\baselineskip=0.30in

\section{Introduction}

Chemical graph theory, a subfield of mathematical chemistry, utilizes the principles and methods of graph theory to model and study molecular structures (see, e.g., \cite{Leite-24,Wagner-18,Trina-book}). Within this theoretical framework, a molecule is represented by a graph in which vertices correspond to atoms and edges to chemical bonds, thereby enabling a rigorous mathematical analysis of molecular properties. Terminology and foundational concepts from both general graph theory and chemical graph theory, as utilized throughout this study, can be found in standard books such as \cite{Bondy-book,Chartrand-16} and \cite{Wagner-18,Trina-book}, respectively.

A key aspect of chemical graph theory is the development and application of molecular descriptors, which are widely used in cheminformatics for tasks such as virtual screening and the prediction of physicochemical properties \cite{Basak-book}. Todeschini and Consonni~\cite{Todeschini-20-book} define a molecular descriptor as ``the final result of a logical and mathematical procedure that transforms chemical information encoded in a symbolic representation of a molecule into a useful number or the result of some standardized experiment.'' When such descriptors are formulated using the structure of molecular graphs, they are commonly referred to as topological indices. For recent advances and applications of topological indices in chemistry, we refer the reader to \cite{Desmecht-JCIM-24,Leite-24}.

Among the various classes of topological indices, degree-based topological indices \cite{KD1,gut04,gutman13degree,borovicanin17zagreb,Ali-IJACM-17,Ali-IJQC-17,nithyaa24} occupy a central role due to their computational simplicity and their good empirical correlation with numerous molecular properties.  One of the most prominent degree-based indices is the Sombor (SO) index, introduced by Gutman in \cite{gutman21geo}. For a simple graph $G$, it is defined as
\[
\mathcal{SO}(G) = \sum_{uv \in E(G)} \sqrt{(d(u))^2 + (d(v))^2},
\]
where $E(G)$ denotes the edge set of $G$, and $d(u)$ represents the degree of vertex $u$. If multiple graphs are under consideration, we write $d_G(u)$ to indicate that the degree is taken in graph $G$. The Sombor index has gained significant attention in recent literature; for instance, see the survey \cite{liu22review} and some of the recent studies \cite{chen24,das24open,new2,new3,new11}.

Gutman also introduced two variants of the SO index in \cite{gutman21geo}, namely the reduced Sombor index and the average Sombor index. Since then, its numerous variants have been proposed, including the elliptic \cite{gutman24elliptic,gutman24eu}, Euler \cite{new7}, Zagreb \cite{Ali-AIMS-25,Ali-AIMS-2024}, hyperbolic \cite{Barman-MATCH-26}, diminished \cite{Movahedi-26}, and augmented \cite{ASO-index} Sombor indices.

The first part (that is, Section 2) of the present paper focuses on the hyperbolic Sombor (HSO) index, which for a graph $G$ is defined as
\[
\mathcal{HSO}(G) = \sum_{uv \in E(G)} \frac{\sqrt{(d(u))^2 + (d(v))^2}}{\min\{d(u), d(v)\}}.
\]
It is noteworthy that the HSO index can be derived from the diminished Sombor (DSO) index by replacing the denominator $d(u) + d(v)$ with $\min\{d(u), d(v)\}$, where the DSO index is defined as
\[
\mathcal{DSO}(G) = \sum_{uv \in E(G)} \frac{\sqrt{(d(u))^2 + (d(v))^2}}{d(u) + d(v)}.
\]
Here, we consider a natural counterpart to the HSO index that arises from the implementation of the framework of complementary topological indices, as described in~\cite{Furtula-MATCH-25}, to the DSO index:
\begin{equation}\label{eq-cDSO}
\frac{1}{\sqrt{2}} \sum_{uv \in E(G)} \frac{\sqrt{(d(u))^2 + (d(v))^2}}{\max\{d(u), d(v)\}}.
\end{equation}
We may drop the factor $1/\sqrt{2}$ from the expression \eqref{eq-cDSO} and call the resulting formula the complementary diminished Sombor (CDSO) index and denote it by $^c\mathcal{DSO}$. Hence, for a graph $G$, we have
\[
^c\mathcal{DSO}(G)=\sum_{uv\in E(G)} \frac{\sqrt{(d(u))^2+(d(v))^2}}{\max\{d(u),d(v)\}}.
\]
The CDSO index is the focus of Section 3 of the present paper.

All graphs considered in this paper are simple and finite. Let $v$ and $w$ be two non-adjacent vertices of a graph $G$. We denote by $G + vw$ the graph obtained by adding the edge $vw$ to $G$.  The vertex set of a graph $G$ is denoted by $V(G)$. If $|V(G)|=n$, we say that $G$ is an $n$-order graph. For a vertex $u \in V(G)$, define  $N(u)=N_G(u) := \{uv \in V(G) : uv \in E(G)\}$.

In this work, we attempt to correct some inaccuracies in the mathematical properties of the HSO index reported in \cite{Barman-MATCH-26}. In order to disprove a statement given in \cite{Barman-MATCH-26}, we establish a sufficient condition under which the inequality $\mathcal{HSO}(G + vw) < \mathcal{HSO}(G)$ holds, and likewise derive a sufficient condition ensuring the reverse inequality $\mathcal{HSO}(G + vw) > \mathcal{HSO}(G)$. Furthermore, we derive a lower bound for $\mathcal{HSO}(G)$ in terms of the order and size of $G$. Furthermore, we obtain results concerning the CDSO index similar to the ones established for the HSO index.

\section{Hyperbolic Sombor Index}

This section is mainly concerned with Section 3 of the paper \cite{Barman-MATCH-26}. We start with Theorems 1, 2, and 3 of \cite{Barman-MATCH-26}, which give bounds for $\mathcal{HSO}$, where it was claimed that equality in any of these bounds holds if and only if the considered graph is a complete graph. Here, we have fine-tuned this claim. We note that most of these equalities hold for a class of regular graphs. The following proposition is a modified version of Theorems 1, 2, and 3 of \cite{Barman-MATCH-26}.

\begin{proposition}\label{prop-1}
Let $G$ be a connected graph of order at least $2$.
\begin{description}
    \item[(i). (Theorem 1 in \cite{Barman-MATCH-26}).] If $G$ has size $m$, then
$\mathcal{HSO}(G)\ge  \sqrt{2}m$,
with equality if and only if $G$ is a regular graph.

    \item[(ii). (Theorem 2 in \cite{Barman-MATCH-26}).] If the minimum and maximum degrees of $G$ are  $\delta$ and  $\Delta$, respectively, then
\begin{equation}\label{eq-1-prop-2}
\frac{1}{\Delta}\mathcal{SO}(G)\le  \mathcal{HSO}(G) \le \frac{1}{\delta}\mathcal{SO}(G).
\end{equation}
The left equality in \eqref{eq-1-prop-2} holds if and only if $G$ is regular. The right equality in \eqref{eq-1-prop-2} holds if and only if every edge of $G$ is incident to at least one vertex of degree $\delta$.

    \item[(iii). (Theorem 3 in \cite{Barman-MATCH-26}).]
If $G$ has maximum degree $\Delta$, then
\begin{equation*}
\mathcal{HSO}(G) \ge \frac{1}{\sqrt{2}\Delta}\mathcal{M}_1(G),
\end{equation*}
with equality if and only if $G$ is regular.

\end{description}

\end{proposition}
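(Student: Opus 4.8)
The plan is to prove all three parts by reducing each global inequality to a single per-edge estimate and then summing over $E(G)$. For a fixed edge $uv$ I would normalise by assuming without loss of generality that $d(u)\le d(v)$, so that $\min\{d(u),d(v)\}=d(u)$ and the corresponding summand of $\mathcal{HSO}(G)$ becomes $\sqrt{1+(d(v)/d(u))^2}$ with $d(v)/d(u)\ge 1$. This single reformulation drives all three bounds, and the whole argument is essentially a term-by-term comparison.

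For part (i) I would observe that, since $d(v)/d(u)\ge 1$, each summand satisfies $\sqrt{1+(d(v)/d(u))^2}\ge\sqrt{2}$; summing over the $m$ edges yields $\mathcal{HSO}(G)\ge\sqrt{2}\,m$. For part (ii) the key is the edge-by-edge double inequality $\delta\le\min\{d(u),d(v)\}\le\Delta$; dividing $\sqrt{d(u)^2+d(v)^2}$ by each of the three quantities and summing gives the sandwich $\Delta^{-1}\mathcal{SO}(G)\le\mathcal{HSO}(G)\le\delta^{-1}\mathcal{SO}(G)$. For part (iii) I would combine the quadratic-mean/arithmetic-mean inequality $\sqrt{d(u)^2+d(v)^2}\ge (d(u)+d(v))/\sqrt{2}$ with $\min\{d(u),d(v)\}\le\Delta$ to obtain, for each edge, $\sqrt{d(u)^2+d(v)^2}/\min\{d(u),d(v)\}\ge (d(u)+d(v))/(\sqrt{2}\,\min\{d(u),d(v)\})\ge (d(u)+d(v))/(\sqrt{2}\,\Delta)$; summing and using $\mathcal{M}_1(G)=\sum_{uv\in E(G)}(d(u)+d(v))$ gives the claim.

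The delicate and genuinely new part is the equality analysis, which is exactly where the correction to \cite{Barman-MATCH-26} lives. In each case the termwise estimate forces, for every edge $uv$, a condition on $d(u)$ and $d(v)$: for (i) and (iii) equality throughout forces $d(u)=d(v)$ on every edge (in (iii) the quadratic-mean equality gives $d(u)=d(v)$, and the second step forces this common value to equal $\Delta$), while for the left bound in (ii) it forces $\min\{d(u),d(v)\}=\Delta$, hence again $d(u)=d(v)=\Delta$ on every edge. The main obstacle is then the passage from these local conditions to a global structural conclusion, and this is precisely where connectivity enters: if $d(u)=d(v)$ holds on every edge, then walking along any path shows that all vertices of a connected graph share a single degree, so $G$ is regular. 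This is what yields \emph{regular} rather than \emph{complete}, correcting the original claim. For the right bound in (ii) the termwise equality $\min\{d(u),d(v)\}=\delta$ translates directly, with no connectivity argument needed, into the statement that every edge is incident to a vertex of degree $\delta$. Finally I would verify the converse in each case by direct substitution: a regular graph makes every summand equal to $\sqrt{2}$, and the claimed identities for $\mathcal{SO}$ and $\mathcal{M}_1$ on regular graphs then follow at once.
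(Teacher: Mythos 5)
Your proof is correct: the termwise estimates, the QM--AM step in (iii), and the connectivity argument passing from ``$d(u)=d(v)$ on every edge'' to regularity all hold, and they correctly justify the refined equality conditions (regular rather than complete) that constitute the paper's correction of \cite{Barman-MATCH-26}. Note that the paper itself states this proposition without proof, so your argument supplies the standard justification rather than deviating from a printed one.
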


The formula concerning $\mathcal{HSO}(P_n)$ in Lemma 1 of \cite{Barman-MATCH-26} does not hold for $n=2$, and hence the constraint ``$n\ge2$'' needs to be replaced with ``$n\ge3$''.

Now, we turn our attention to some mistakes in Theorem 4 of \cite{Barman-MATCH-26}. Although the statement of this theorem is correct, its proof seems to be questionable. Particularly, its proof starts with the following text: ``It is obvious that the value of $\mathcal{HSO}(G)$ increases when we add edges
to the graph $G$. A tree obtains the highest value of $\mathcal{HSO}(G)$ of a connected graph.'' First, we recall that any increasing topological index is maximized by the complete graph $K_n$ (not a tree) over the class of all connected graphs of order $n\ (\ge3)$, where a topological index $TI$ is said to be an increasing topological index if $TI(G+uv)>TI(G)$ for any two non-adjacent vertices $u,v\in V(G)$.

Now, corresponding to the text ``It is obvious that the value of $\mathcal{HSO}(G)$ increases when we add edges
to the graph $G$'', we provide the next result.

\begin{proposition}\label{prop-G+uv<G}
Let $G$ be a graph of minimum degree $\delta~(\ge1)$. Let $u$ and $v$ be non-adjacent vertices of $G$ such that $d(u)=\delta< \min_{u'\in N(u)}\{d(u')\}$ and $d(v)=\delta < \min_{v'\in N(v)}\{d(v')\}$. Then,
$$
\mathcal{HSO}(G+uv)
< \mathcal{HSO}(G).
$$
\end{proposition}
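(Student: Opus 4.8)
The plan is to compute the difference $\mathcal{HSO}(G+uv)-\mathcal{HSO}(G)$ directly, exploiting the fact that inserting the edge $uv$ raises both $d(u)$ and $d(v)$ from $\delta$ to $\delta+1$ while leaving every other degree untouched. Since $u$ and $v$ are non-adjacent and each has exactly $\delta$ neighbours, all of degree at least $\delta+1$, the only edges whose contributions change are the new edge $uv$ together with the $2\delta$ (pairwise distinct) edges joining $u$ or $v$ to their neighbours. For the new edge both endpoints have degree $\delta+1$, so it contributes $\sqrt{2(\delta+1)^2}/(\delta+1)=\sqrt{2}$. Hence I would write
\[
\mathcal{HSO}(G+uv)-\mathcal{HSO}(G)=\sqrt{2}-\Sigma,
\]
where $\Sigma$ collects the (signed) changes over the $2\delta$ incident edges, and the goal becomes showing $\Sigma>\sqrt{2}$.

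To analyse $\Sigma$, introduce $g_b(x)=\sqrt{x^2+b^2}/x=\sqrt{1+b^2/x^2}$, which is strictly decreasing in $x$. For a neighbour $w$ of $u$ (or of $v$) we have $d(w)\ge\delta+1$, so $\min\{\delta,d(w)\}=\delta$ and $\min\{\delta+1,d(w)\}=\delta+1$; consequently that edge changes by $g_{d(w)}(\delta+1)-g_{d(w)}(\delta)<0$, and its contribution to $\Sigma$ is $g_{d(w)}(\delta)-g_{d(w)}(\delta+1)>0$. Thus $\Sigma$ is a sum of $2\delta$ positive terms.

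The decisive step is to bound $\Sigma$ from below. I would verify that $b\mapsto g_b(\delta)-g_b(\delta+1)$ is non-decreasing for $b\ge\delta+1$ (differentiating in $b$ gives $\tfrac{b}{\delta\sqrt{\delta^2+b^2}}-\tfrac{b}{(\delta+1)\sqrt{(\delta+1)^2+b^2}}>0$), so each summand is minimised when $d(w)=\delta+1$. This yields
\[
\Sigma\ge 2\delta\bigl(g_{\delta+1}(\delta)-g_{\delta+1}(\delta+1)\bigr)=2\delta\left(\frac{\sqrt{2\delta^2+2\delta+1}}{\delta}-\sqrt{2}\right)=2\sqrt{2\delta^2+2\delta+1}-2\sqrt{2}\,\delta .
\]
It then remains to prove the scalar inequality $2\sqrt{2\delta^2+2\delta+1}-2\sqrt{2}\,\delta>\sqrt{2}$, equivalently $\sqrt{2\delta^2+2\delta+1}>(2\delta+1)/\sqrt{2}$; squaring both (positive) sides reduces this to $2\delta^2+2\delta+1>2\delta^2+2\delta+\tfrac12$, i.e. $1>\tfrac12$, which holds for every $\delta\ge1$. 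Combining, $\Sigma>\sqrt{2}$, and therefore $\mathcal{HSO}(G+uv)<\mathcal{HSO}(G)$.

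The main obstacle is precisely this last lower bound on $\Sigma$, because the estimate is genuinely delicate: for large $\delta$ with all neighbour degrees equal to $\delta+1$, each incident edge loses only about $1/(\sqrt{2}\,\delta)$, so the total loss $\Sigma$ is asymptotically $\sqrt{2}$ and almost exactly cancels the gain from the new edge. The clean margin $1>\tfrac12$ emerging after squaring is what guarantees that the strict inequality survives this near-cancellation, while the monotonicity-in-$b$ reduction is what permits replacing the unknown neighbour degrees by the worst case $d(w)=\delta+1$.
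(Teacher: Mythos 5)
Your proof is correct and follows essentially the same route as the paper's: both reduce to the worst case in which every neighbour of $u$ and $v$ has degree exactly $\delta+1$ (via monotonicity of the decrement in the neighbour's degree) and then verify the scalar inequality $2\sqrt{2\delta^2+2\delta+1}>\sqrt{2}\,(2\delta+1)$. The only difference is that you explicitly justify the monotonicity step with the $\partial/\partial b$ computation, which the paper merely asserts.
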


\begin{proof}
Let
$\phi(x,y)= (\min\{x,y\})^{-1}\sqrt{x^2+y^2}$. Since the inequality
\[
\phi(\delta,d_G(w))-\phi(\delta+1,d_G(w))\ge\phi(\delta,\delta+1)-\phi(\delta+1,\delta+1)
\]
holds for any vertex $w\in V(G)$ of degree larger than $\delta$, we have
\begin{align*}
\mathcal{HSO}(G)-\mathcal{HSO}(G+uv)
&= \sum_{u'\in N_G(u)}[\phi(\delta,d_G(u'))-\phi(\delta+1,d_G(u'))]\\[2mm]
&+ \sum_{v'\in N_G(v)}[\phi(\delta,d_G(v'))-\phi(\delta+1,d_G(v'))] \\
& - \phi(\delta+1,\delta+1)\\[2mm]
&\ge 2\delta\,\phi(\delta,\delta+1)-(2\delta+1)\,\phi(\delta+1,\delta+1)>0,
\end{align*}
as desired.
\end{proof}

Along with Proposition \ref{prop-G+uv<G}, we provide the following result.

\begin{proposition}\label{prop-G+uv>G}
Let $u$ and $v$ be non-adjacent vertices of a graph $G$ of minimum degree at least one such that $d(u)\ge \max_{u'\in N(u)}\{d(u')\}$ and $d(v)\ge \max_{v'\in N(v)}\{d(v')\}$. Then
$$
\mathcal{HSO}(G+uv)
> \mathcal{HSO}(G).
$$
\end{proposition}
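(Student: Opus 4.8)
The plan is to mirror the structure of the proof of Proposition \ref{prop-G+uv<G}, again working with $\phi(x,y)=(\min\{x,y\})^{-1}\sqrt{x^2+y^2}$, but now exploiting the fact that the present hypotheses force every relevant minimum to be attained at the \emph{neighbor}, so the analysis becomes purely monotone rather than requiring a delicate balance. Writing $d=d_G$ throughout, I would first decompose the difference into the change over edges incident to $u$, the change over edges incident to $v$, and the contribution of the new edge $uv$. Since $u$ and $v$ are non-adjacent, $v\notin N_G(u)$ and the incident-edge families are disjoint, so
\begin{align*}
\mathcal{HSO}(G+uv)-\mathcal{HSO}(G)
&=\sum_{u'\in N_G(u)}\bigl[\phi(d(u)+1,d(u'))-\phi(d(u),d(u'))\bigr]\\
&\quad+\sum_{v'\in N_G(v)}\bigl[\phi(d(v)+1,d(v'))-\phi(d(v),d(v'))\bigr]\\
&\quad+\phi(d(u)+1,d(v)+1).
\end{align*}

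The key step is to use the hypothesis $d(u)\ge\max_{u'\in N(u)}d(u')$, which gives $d(u')\le d(u)<d(u)+1$ for every neighbor $u'$, whence $\min\{d(u),d(u')\}=\min\{d(u)+1,d(u')\}=d(u')$. Therefore along each edge incident to $u$ the denominator of $\phi$ is unchanged and only the numerator grows:
$$\phi(d(u)+1,d(u'))-\phi(d(u),d(u'))=\frac{\sqrt{(d(u)+1)^2+d(u')^2}-\sqrt{d(u)^2+d(u')^2}}{d(u')}>0,$$
because $(d(u)+1)^2>d(u)^2$. The identical argument applies to every neighbor $v'$ of $v$ via $d(v)\ge\max_{v'\in N(v)}d(v')$, so both sums consist entirely of positive terms.

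Finally, the new-edge term $\phi(d(u)+1,d(v)+1)$ is positive by definition, and the assumption that $G$ has minimum degree at least one ensures $u$ and $v$ each have a neighbor, so both sums are nonempty. Adding the three nonnegative parts, with the new-edge term strictly positive, yields $\mathcal{HSO}(G+uv)-\mathcal{HSO}(G)>0$, as claimed. I do not expect a genuine obstacle here: unlike in Proposition \ref{prop-G+uv<G}, where the gain on incident edges had to be weighed against the cost of the new edge, the local-maximality hypotheses make every contribution nonnegative and the new edge alone already forces strictness. The only point demanding care is checking that raising $d(u)$ and $d(v)$ by one does not switch which endpoint realizes the minimum on an incident edge, and this is precisely what the conditions $d(u)\ge\max_{u'\in N(u)}d(u')$ and $d(v)\ge\max_{v'\in N(v)}d(v')$ rule out.
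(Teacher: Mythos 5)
Your proof is correct and takes essentially the same route as the paper's: the identical decomposition of $\mathcal{HSO}(G+uv)-\mathcal{HSO}(G)$ into per-neighbor terms plus the new-edge term $\phi(d(u)+1,d(v)+1)$, with each incident-edge term shown to have the right sign. In fact you supply more justification than the paper (which merely labels those terms ``negative''), by observing that the hypotheses force $\min\{d(u)+1,d(u')\}=\min\{d(u),d(u')\}=d(u')$, so only the numerator of $\phi$ increases.
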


\begin{proof}
Let
$\phi(x,y)= (\min\{x,y\})^{-1}\sqrt{x^2+y^2}$. The required inequality holds because
\begin{align*}
&\mathcal{HSO}(G)-\mathcal{HSO}(G+uv)\\[2mm]
&= \sum_{u'\in N_G(u)}\bigg[\underbrace{\phi(d_G(u),d_G(u'))-\phi(d_G(u)+1,d_G(u'))}_{negative}\bigg]\\[2mm]
&+ \sum_{v'\in N_G(v)}\bigg[\underbrace{\phi(d_G(v),d_G(v'))-\phi(d_G(v)+1,d_G(v'))}_{negative}\bigg] \\
& - \phi(d_G(u)+1,d_G(v)+1) <0\,.
\end{align*}
\end{proof}

Before proceeding further, we recall the following known result.

\begin{lemma}\label{lem-m->n-1/n}{\rm \cite{Ali-AIMS-25}}
Let $G$ be a connected graph of order $n\ (\ge4)$ and size $m$. Let $\hbar$ be a function defined on the Cartesian square of the set of real numbers greater than or equal to 1 such that $\hbar(x_1,x_2)=\hbar(x_2,x_1)\ge0$ for all $x_1$ and $x_2$ belonging to the domain of $\hbar$ and $\hbar(x_1,x_2)>0$ for $x_1\ne x_2$.
Define the function $\Phi$ on the Cartesian square of the set of positive integers by
\begin{equation*}
\Phi(r_1,r_2):=  \hbar(r_1,r_2) +\frac{2\hbar(1,2)(r_1r_2-r_1-r_2)}{r_1r_2}   +\frac{\hbar(2,2)(2r_1+2r_2-3r_1r_2)}{r_1r_2},
\end{equation*}
with $n-1\ge r_2\ge r_1\ge 1$.
If $\Phi(r_1,r_2)>0$ for $n-1\ge r_2\ge r_1\ge 1$ provided that $(r_1,r_2)\not\in\{(1,1),(1,2),(2,2)\}$, then
\begin{equation}\label{eq-AIMS-paper}
\sum_{uv\in E(G)} \hbar(d_G(u),d_G(v)) \ge 2[\hbar(1,2)-\hbar(2,2)]n + [3\hbar(2,2)-2\hbar(1,2)]m,
\end{equation}
with equality if and only if $G$ is either path graph $P_n$ or cycle graph $C_n$. On the other hand, if $\Phi(r_1,r_2)<0$ for $n-1\ge r_2\ge r_1\ge 1$ such that $(r_1,r_2)\not\in\{(1,1),(1,2),(2,2)\}$, then inequality \eqref{eq-AIMS-paper} is reversed.
\end{lemma}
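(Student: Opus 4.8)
The plan is to collapse the whole statement into a single per-edge inequality by rewriting the right-hand side of \eqref{eq-AIMS-paper} as a sum over the edges of $G$. Abbreviate the two coefficients appearing there by $A:=2[\hbar(1,2)-\hbar(2,2)]$ and $B:=3\hbar(2,2)-2\hbar(1,2)$, so that the target bound is $\sum_{uv\in E(G)}\hbar(d_G(u),d_G(v))\ge An+Bm$. The crux is the elementary reciprocal-degree identity
\[
\sum_{uv\in E(G)}\left(\frac{1}{d_G(u)}+\frac{1}{d_G(v)}\right)=\sum_{w\in V(G)}\frac{1}{d_G(w)}\cdot d_G(w)=n,
\]
which holds because $G$ is connected of order at least $4$ and therefore has no isolated vertex. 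Together with the trivial identity $m=\sum_{uv\in E(G)}1$, this lets me write $An+Bm=\sum_{uv\in E(G)}\bigl[A(1/d_G(u)+1/d_G(v))+B\bigr]$, turning the affine function $An+Bm$ into an edge sum.

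Next I would subtract the two edge sums term by term. A direct expansion of $A(1/r_1+1/r_2)+B$, using $1/r_1+1/r_2=(r_1+r_2)/(r_1r_2)$, shows that
\[
\hbar(r_1,r_2)-A\!\left(\frac{1}{r_1}+\frac{1}{r_2}\right)-B=\Phi(r_1,r_2),
\]
i.e.\ the per-edge excess is exactly the function $\Phi$ from the statement. Hence
\[
\sum_{uv\in E(G)}\hbar(d_G(u),d_G(v))-(An+Bm)=\sum_{uv\in E(G)}\Phi(d_G(u),d_G(v)),
\]
and the entire problem reduces to showing $\sum_{uv\in E(G)}\Phi(d_G(u),d_G(v))\ge0$.

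To close this I would record two facts. First, substitution gives $\Phi(1,2)=\Phi(2,2)=0$, so the exceptional pairs are precisely the arguments at which $\Phi$ vanishes, while an edge of type $(1,1)$ is impossible in a connected graph of order at least $3$ (it would be an isolated $K_2$). Second, every edge $uv$ satisfies $1\le\min\{d_G(u),d_G(v)\}$ and $\max\{d_G(u),d_G(v)\}\le n-1$, so its degree pair lies in the range $n-1\ge r_2\ge r_1\ge1$ covered by the hypothesis. Under the assumption that $\Phi>0$ off the three exceptional pairs, each summand is therefore nonnegative, which yields $\sum_{uv}\Phi(d_G(u),d_G(v))\ge0$ and hence \eqref{eq-AIMS-paper}. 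The reversed case $\Phi<0$ follows from the very same identity with all inequalities flipped.

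For the equality characterization, equality forces $\Phi(d_G(u),d_G(v))=0$ for every edge, i.e.\ every edge is of type $(1,2)$ or $(2,2)$, which is equivalent to $\Delta(G)\le2$; for a connected graph of order at least $4$ this means $G\cong P_n$ or $G\cong C_n$, and conversely both attain equality because $A$ and $B$ were calibrated so that the bound is tight on paths and cycles (one checks $An+Bm$ reproduces $2\hbar(1,2)+(n-3)\hbar(2,2)$ for $P_n$ and $n\hbar(2,2)$ for $C_n$). The genuinely delicate step is not any single estimate but the reformulation itself—discovering that $n$ can be encoded as the edge sum of reciprocal degrees and that the resulting per-edge discrepancy is literally $\Phi$; after that, the only care needed is in the equality analysis, where the vanishing of $\Phi$ at $(1,2)$ and $(2,2)$ together with the impossibility of a $(1,1)$ edge is exactly what pins the extremal graphs down to paths and cycles rather than a larger family.
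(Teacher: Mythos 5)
Your argument is correct: the identity $\sum_{uv\in E(G)}\bigl(1/d_G(u)+1/d_G(v)\bigr)=n$ together with $m=\sum_{uv\in E(G)}1$ does convert $An+Bm$ into an edge sum whose per-edge discrepancy is exactly $\Phi$, and your checks that $\Phi(1,2)=\Phi(2,2)=0$, that a $(1,1)$-edge cannot occur in a connected graph of order at least $3$, and that the extremal graphs are precisely those with maximum degree $2$ are all sound. The paper itself imports this lemma from \cite{Ali-AIMS-25} without reproducing a proof, so there is nothing internal to compare against; your reconstruction follows the standard route (the very shape of $\Phi$ encodes the decomposition you found), and it is essentially the argument used in the cited source.
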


Here, we remark that the general topological index given on the left-hand side of inequality \eqref{eq-AIMS-paper} is known as a bond incident degree index; for instance, see \cite{Ali-CJC-16}.
Next, using Lemma \ref{lem-m->n-1/n}, we give a lower bound on $\mathcal{HSO}(G)$ in terms of order and size of $G$.

\begin{proposition}\label{prop-path-cycle-minimal}
Let $G$ be a connected graph of order $n\ (\ge4)$ and size $m$. Then,
\begin{equation*}
\mathcal{HSO}(G) \ge 2\left(\sqrt{5}-\sqrt{2}\right)n + \left(3\sqrt{2}-2\sqrt{5}\right)m,
\end{equation*}
with equality if and only if $G$ is either path graph $P_n$ or cycle graph $C_n$.
\end{proposition}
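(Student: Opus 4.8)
The plan is to invoke Lemma \ref{lem-m->n-1/n} with the bond incident degree function of the HSO index, namely $\hbar(x,y)=(\min\{x,y\})^{-1}\sqrt{x^2+y^2}$. This $\hbar$ is symmetric, nonnegative, and strictly positive whenever $x\ne y$, so the hypotheses imposed on $\hbar$ are satisfied. A direct computation gives $\hbar(1,2)=\sqrt5$ and $\hbar(2,2)=\sqrt2$, whence the right-hand side of \eqref{eq-AIMS-paper} becomes exactly $2(\sqrt5-\sqrt2)n+(3\sqrt2-2\sqrt5)m$, matching the claimed bound. Thus everything reduces to verifying the sign condition $\Phi(r_1,r_2)>0$ for all integers $n-1\ge r_2\ge r_1\ge1$ with $(r_1,r_2)\notin\{(1,1),(1,2),(2,2)\}$.

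To this end I would first rewrite $\Phi$ in a transparent form. Using $r_2\ge r_1$, so that $\min\{r_1,r_2\}=r_1$, a short manipulation yields
$$\Phi(r_1,r_2)=\frac{\sqrt{r_1^2+r_2^2}}{r_1}+\bigl(2\sqrt5-3\sqrt2\bigr)-\bigl(2\sqrt5-2\sqrt2\bigr)\Bigl(\frac1{r_1}+\frac1{r_2}\Bigr).$$
Here $2\sqrt5-3\sqrt2>0$ and $2\sqrt5-2\sqrt2>0$, so only the last (subtracted) term works against positivity. The key structural observation is monotonicity: for fixed $r_1$, both $r_2\mapsto\sqrt{r_1^2+r_2^2}/r_1$ and $r_2\mapsto-(2\sqrt5-2\sqrt2)/r_2$ are increasing, so $\Phi(r_1,r_2)$ is nondecreasing in $r_2$.

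Armed with monotonicity, the verification splits into a clean tail estimate and a finite base check. For $r_1\ge3$ one has $\tfrac1{r_1}+\tfrac1{r_2}\le\tfrac23$ together with $\sqrt{r_1^2+r_2^2}/r_1\ge\sqrt2$, and substituting these bounds collapses the expression to the uniform lower bound $\Phi(r_1,r_2)\ge\tfrac23(\sqrt5-\sqrt2)>0$. For the remaining rows $r_1=1$ and $r_1=2$, monotonicity in $r_2$ means it suffices to check the smallest admissible value $r_2=3$ in each case (the pairs $(1,1),(1,2),(2,2)$ being excluded); evaluating $\Phi(1,3)=\sqrt{10}-\tfrac23\sqrt5-\tfrac13\sqrt2$ and $\Phi(2,3)=\tfrac12\sqrt{13}+\tfrac13\sqrt5-\tfrac43\sqrt2$ shows both are strictly positive. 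This establishes $\Phi>0$ throughout the required range.

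Finally, Lemma \ref{lem-m->n-1/n} applies verbatim and delivers both the inequality and the characterization of equality as $G\cong P_n$ or $G\cong C_n$. The main obstacle is precisely the positivity of $\Phi$: because the denominator involves $\min\{r_1,r_2\}$ and the numerator a square root, there is no purely algebraic sign argument available, so one must lean on the monotonicity in $r_2$ to reduce an infinite family of inequalities to a single tail bound for $r_1\ge3$ together with two explicit base evaluations.
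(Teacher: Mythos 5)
Your proposal is correct and follows essentially the same route as the paper: both invoke Lemma \ref{lem-m->n-1/n} with $\hbar(x,y)=(\min\{x,y\})^{-1}\sqrt{x^2+y^2}$, compute $\hbar(1,2)=\sqrt{5}$ and $\hbar(2,2)=\sqrt{2}$, and reduce everything to the positivity of $\Phi$. The only difference lies in how that positivity is verified---you use monotonicity in $r_2$ plus a uniform tail bound for $r_1\ge3$ and the two base evaluations $\Phi(1,3),\Phi(2,3)$, whereas the paper clears denominators, minorizes $s\sqrt{r^2+s^2}$ by $s^2$ to get a quadratic $F(r,s)$, and splits on $3\le s\le 8$ versus $s\ge 8$; both verifications are sound, and yours is arguably the cleaner of the two.
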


\begin{proof}
We set $\hbar(r,s)=r^{-1}\sqrt{r^2+s^2}$ in the definition of the function $\Phi$ given in Lemma \ref{lem-m->n-1/n} and see that $\Phi(r,s)=(rs)^{-1}\,\Psi(r,s)$, where
\[
\Psi(r,s):=\left(2\sqrt{5}-3\sqrt{2}\right)rs-2\left(\sqrt{5}-\sqrt{2}\right)(r+s)+s\sqrt{r^2+s^2},
\]
provided that $r$ and $s$ are integers satisfying $n-1\ge s\ge r\ge 1$ such that \linebreak $(r,s)\not\in\{(1,1),(1,2),(2,2)\}$.
In what follows, we prove that $\Phi(r,s)>0$. Hence, by Lemma \ref{lem-m->n-1/n}, we will have the required conclusion.

Since $s \ge r \ge 1$, we have
\begin{align}\label{eq-(r,s)}
\Psi(r,s)> F(r,s):&= \left(2\sqrt{5}-3\sqrt{2}\right)rs-2\left(\sqrt{5}-\sqrt{2}\right)(r+s)+s^2.
\end{align}
If $s \ge 8$ is fixed, then $F$ is minimized at $r=1$ and hence inequality \eqref{eq-(r,s)} yields $\Psi(r,s)>F(r,s)\ge F(1,s)>0$.
If $s$ is fixed such that $3\le s \le 8$, then $F$ is minimized at $r=s$ and hence  $\Psi(r,s)>F(r,s)\ge F(s,s)>0$.
\end{proof}

In a graph $G$, a vertex of degree $0$ is known as an isolated vertex of $G$.

\begin{theorem}{\rm \cite{Deng-JCO-15}}\label{Deng-JCO-15-lem}
Let $\hbar$ be the function defined in Lemma \ref{lem-m->n-1/n}. Let $G$ be a graph of order $n\ (\ge3)$ without isolated vertices.
Let
\begin{equation}\label{Deng-1}
S_1:=\{(i, j) : 1 \leq i \leq j \leq n-1\}\setminus\{(1, n-1)\}.
\end{equation}
If
\begin{equation}\label{Deng-2}
 \hbar(i,j)-\frac{n-1}{n}\left(\frac{1}{i}+\frac{1}{j}\right) \hbar(1, n-1)<0
\end{equation}
for every $(i, j) \in S_1$,
then
$$\sum_{uv\in E(G)} \hbar(d_G(u),d_G(v)) \leq(n-1) \hbar(1, n-1),$$ with equality if and only if $G\cong S_n$.

\end{theorem}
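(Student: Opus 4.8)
The plan is to reduce the global inequality to a single per-edge estimate that is handed to us almost verbatim by the hypothesis \eqref{Deng-2}, and then to sum using a weighted handshaking identity. First I would record that, since $G$ has no isolated vertices and $n\ge3$, every vertex has degree between $1$ and $n-1$; hence for each edge $uv$ the ordered pair obtained by sorting $d_G(u),d_G(v)$ lies in $\{(i,j):1\le i\le j\le n-1\}=S_1\cup\{(1,n-1)\}$. Using the symmetry $\hbar(x_1,x_2)=\hbar(x_2,x_1)$ and the symmetry of $\tfrac1i+\tfrac1j$, hypothesis \eqref{Deng-2} then yields, for every edge $uv$,
\[
\hbar(d_G(u),d_G(v))\le\frac{n-1}{n}\Big(\frac{1}{d_G(u)}+\frac{1}{d_G(v)}\Big)\hbar(1,n-1),
\]
the inequality being strict unless $\{d_G(u),d_G(v)\}=\{1,n-1\}$, in which case it is an equality because $\frac{n-1}{n}\big(1+\frac{1}{n-1}\big)=1$.

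Next I would sum this estimate over all edges. The right-hand side factors as $\frac{n-1}{n}\hbar(1,n-1)\sum_{uv\in E(G)}\big(\frac{1}{d_G(u)}+\frac{1}{d_G(v)}\big)$, and the decisive observation is the identity
\[
\sum_{uv\in E(G)}\Big(\frac{1}{d_G(u)}+\frac{1}{d_G(v)}\Big)=\sum_{w\in V(G)}d_G(w)\cdot\frac{1}{d_G(w)}=n,
\]
valid precisely because no vertex is isolated, so each reciprocal $1/d_G(w)$ is counted once per incident edge, that is, $d_G(w)$ times. Substituting gives $\sum_{uv\in E(G)}\hbar(d_G(u),d_G(v))\le\frac{n-1}{n}\hbar(1,n-1)\cdot n=(n-1)\hbar(1,n-1)$, which is the asserted bound (here $\hbar(1,n-1)>0$ since $1\ne n-1$ for $n\ge3$).

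For the equality characterization I would argue backward. Equality in the summed inequality forces equality in the per-edge estimate for each edge, which by the strictness in \eqref{Deng-2} means every edge satisfies $\{d_G(u),d_G(v)\}=\{1,n-1\}$. Such an edge has an endpoint of degree $n-1$, a vertex adjacent to all others; every remaining vertex is then joined to it by an edge of type $(1,n-1)$ and so has degree $1$. This forces $G\cong S_n$, and conversely $S_n$ attains the bound since each of its $n-1$ edges joins a leaf to the centre.

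I do not expect a genuine obstacle: the whole argument is routine once one sees that \eqref{Deng-2} is exactly calibrated so that the per-edge bound holds with equality only on the pair $(1,n-1)$, and that the weighted edge sum collapses to $n$ under the no-isolated-vertex hypothesis. The only point requiring care is the equality analysis, where I must verify that ``every edge is of type $(1,n-1)$'' genuinely pins down the star and admits no competing configuration.
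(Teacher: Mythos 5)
Your proof is correct. Note that the paper does not prove this statement at all: it is imported verbatim as a known result from Deng, Huang, and Jiang \cite{Deng-JCO-15}, so there is no in-paper argument to compare against. Your route --- the per-edge bound $\hbar(d(u),d(v))\le\frac{n-1}{n}\bigl(\frac{1}{d(u)}+\frac{1}{d(v)}\bigr)\hbar(1,n-1)$, which is an identity exactly at the pair $(1,n-1)$ because $\frac{n-1}{n}\bigl(1+\frac{1}{n-1}\bigr)=1$, followed by the weighted handshaking identity $\sum_{uv\in E(G)}\bigl(\frac{1}{d(u)}+\frac{1}{d(v)}\bigr)=n$ for graphs without isolated vertices --- is precisely the mechanism behind the cited result (it is the combinatorial content of the ``linear-programming'' formulation in that reference), and your equality analysis correctly pins down $S_n$ as the unique extremal graph. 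The only hypotheses you implicitly use, namely $n\ge3$ (so that $n-1\ne1$ and the type-$(1,n-1)$ edges force a genuine star) and the absence of isolated vertices (so that the edge sum collapses to $n$ rather than to the number of non-isolated vertices), are both available and are invoked at the right places.
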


\begin{lemma}\label{cor-Sn}
If  $\hbar(i,j):=i^{-1}\sqrt{i^2+j^2}$, then inequality \eqref{Deng-2} holds for every $(i, j) \in S_1$ with $n\ge3$.
\end{lemma}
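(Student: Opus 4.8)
The plan is to substitute the explicit form of $\hbar$ and reduce the two-variable inequality to a single-variable one by a monotonicity argument. First I would record that $\hbar(1,n-1)=\sqrt{1+(n-1)^2}$ and rewrite the target inequality \eqref{Deng-2} as $f(i,j)<0$, where
\[
f(i,j):=\frac{\sqrt{i^2+j^2}}{i}-\frac{n-1}{n}\sqrt{1+(n-1)^2}\left(\frac1i+\frac1j\right).
\]
A direct computation gives $f(1,n-1)=0$, which explains why the pair $(1,n-1)$ is excluded from $S_1$; the goal is to show that the inequality is strict everywhere else in $S_1$.

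The key observation is that, for fixed $i$, the map $j\mapsto f(i,j)$ is strictly increasing, since its partial derivative in $j$,
\[
\frac{\partial f}{\partial j}=\frac{j}{i\sqrt{i^2+j^2}}+\frac{n-1}{n}\sqrt{1+(n-1)^2}\cdot\frac{1}{j^2}>0,
\]
is a sum of two positive terms. Consequently, over the admissible integer values of $j$ the quantity $f(i,j)$ is largest at the biggest permissible $j$, and the argument splits into two cases. For $i=1$ the largest admissible value is $j=n-2$ (because $(1,n-1)$ is excluded), whence $f(1,j)\le f(1,n-2)<f(1,n-1)=0$. For $i\ge 2$ the largest admissible value is $j=n-1$, so it suffices to prove $f(i,n-1)<0$ for $2\le i\le n-1$.

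Writing $N:=n-1\ (\ge 2)$, I would clear the positive factor $i$ and reduce $f(i,N)<0$ to the form $(N+1)\sqrt{i^2+N^2}<(N+i)\sqrt{1+N^2}$. As both sides are positive, squaring is reversible, and after expansion the difference of the two sides collapses (the $N^4$, $N^2i^2$, $N^2$, and $i^2$ terms all cancel) to
\[
(N+i)^2(1+N^2)-(N+1)^2(i^2+N^2)=2N(i-1)(N^2-i).
\]
This factorization is the crux: for $2\le i\le N$ and $N\ge 2$ every factor is strictly positive, so the squared inequality, and hence $f(i,N)<0$, holds. Combining this with the monotonicity step settles both cases, and the lemma then follows (and feeds into Theorem \ref{Deng-JCO-15-lem}).

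The main obstacle I anticipate is organizing the reduction cleanly: the subtracted term in $f$ decreases in both $i$ and $j$, whereas $\hbar(i,j)$ itself is not jointly monotone, so a naive ``maximize at the corner'' heuristic is not immediately justified. It is the strict monotonicity in $j$ \emph{alone} that legitimately collapses the problem to the one-parameter family $f(i,n-1)$, after which everything rests on the fortunate cancellation yielding the clean factor $2N(i-1)(N^2-i)$, which simultaneously encodes the strict inequality for $i\ge 2$ and the boundary equality at $i=1$.
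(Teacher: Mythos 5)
Your proof is correct, and its decisive step differs from the paper's. Both arguments begin the same way --- clearing denominators and exploiting monotonicity to push $(i,j)$ toward the boundary, with the same case split $i=1$ versus $i\ge 2$ --- but they diverge after that. The paper rewrites the target as $H(i,j,n)>1$ with $H(i,j,n)=\frac{n-1}{n}\sqrt{(n-1)^2+1}\cdot\frac{i+j}{j\sqrt{i^2+j^2}}$ and chains lower bounds on $\frac{i+j}{j\sqrt{i^2+j^2}}$ down to corner expressions such as $\frac{n-1}{(n-2)\sqrt{(n-2)^2+1}}$ and $\frac{n+1}{(n-1)\sqrt{(n-1)^2+4}}$, leaving the final comparison with $1$ essentially asserted. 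You instead use only the strict monotonicity of $f(i,\cdot)$ in $j$ (each of the two terms of $\partial f/\partial j$ is indeed positive) to reduce to $j=n-2$ when $i=1$ (where the boundary identity $f(1,n-1)=0$ finishes it) and to $j=n-1$ when $i\ge 2$, and then close the second case by the exact factorization
\[
(N+i)^2(1+N^2)-(N+1)^2(i^2+N^2)=2N(i-1)(N^2-i),\qquad N=n-1,
\]
which I have checked and which is manifestly positive for $2\le i\le N$ and $N\ge2$. What your route buys is a fully self-contained, verification-free conclusion: the factorization simultaneously certifies strict positivity for $i\ge2$ and explains the excluded equality case $i=1$, whereas the paper's chain of estimates still requires a (omitted) numerical check at the end. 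The only point worth making explicit in a final write-up is that for $i=1$ the admissible range of $j$ is $1\le j\le n-2$ precisely because $(1,n-1)$ is removed from $S_1$, which you do note.
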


\begin{proof}
The inequality
\begin{align*}
\hbar(i,j)-\frac{n-1}{n}\left(\frac{1}{i}+\frac{1}{j}\right) \hbar(1, n-1)<0
\end{align*}
is equivalent to
\begin{equation}\label{eq-cor-1}
jn\sqrt{i^2+j^2}-(i+j)(n-1)\sqrt{(n-1)^2+1}<0,
\end{equation}
where $1 \leq i \leq j \leq n-1 $, $(i, j)\ne (1, n-1)$ and $n\ge3$. By keeping in mind \eqref{eq-cor-1}, we define
\begin{equation}\label{eq-H(i,j)}
H(i,j,n):=\frac{n-1}{n}\,\sqrt{(n-1)^2+1}\cdot\frac{i+j}{j\sqrt{i^2+j^2}}.
\end{equation}
It suffices to prove that $H(i,j,n)> 1$ for every $(i, j) \in S_1$. We note that, if $(1, j) \in S_1$ then $1\le j\le n-2$.   Hence,
\[
\frac{i+j}{j\sqrt{j^2+j^2}}
\begin{cases}
=\displaystyle \frac{j+1}{j\sqrt{j^2+1}}\ge \frac{n-1}{(n-2)\sqrt{(n-2)^2+1}}  & \text{if $i=1$}\\[5mm]
\ge\displaystyle\frac{j+2}{j\sqrt{j^2+4}}\ge \frac{n+1}{(n-1)\sqrt{(n-1)^2+4}} & \text{if $i\ge2$.}
\end{cases}
\]
Therefore, from \eqref{eq-H(i,j)}, we have $H(i,j,n)>1$ for $(i,j)\in S_1$ and $n\ge3$.
\end{proof}

As we mentioned before, the statement given in Theorem 4 of \cite{Barman-MATCH-26} is correct, but its proof seems to be questionable.
Next, we give a proof of this theorem.

\begin{proposition}\label{thm-Cn-Sn}
If $G$ is a connected graph of order $n~(\ge3)$, then
\begin{equation}\label{thm-Cn-Sn-eq-1}
 \mathcal{HSO}(C_n) \le   \mathcal{HSO}(G) \le \mathcal{HSO}(S_n).
\end{equation}
The left (right, respectively) equality in \eqref{thm-Cn-Sn-eq-1} holds if and only if $G$ is cycle graph $C_n$ (star graph $S_n$, respectively).
\end{proposition}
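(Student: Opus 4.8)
The plan is to prove the two bounds separately, treating the upper bound $\mathcal{HSO}(G)\le\mathcal{HSO}(S_n)$ as an essentially immediate consequence of the machinery already assembled, and reserving the real work for the lower bound $\mathcal{HSO}(C_n)\le\mathcal{HSO}(G)$.

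For the upper bound, I would first note that a connected graph of order $n\ge3$ has no isolated vertex, so Theorem \ref{Deng-JCO-15-lem} applies with the symmetric weight $\hbar(i,j)=(\min\{i,j\})^{-1}\sqrt{i^2+j^2}$, whose edge-sum is exactly $\mathcal{HSO}$. Lemma \ref{cor-Sn} verifies the hypothesis \eqref{Deng-2} for this $\hbar$ and every $(i,j)\in S_1$, so Theorem \ref{Deng-JCO-15-lem} yields $\mathcal{HSO}(G)\le(n-1)\hbar(1,n-1)$ with equality if and only if $G\cong S_n$. Since every edge of $S_n$ joins a leaf (degree $1$) to the centre (degree $n-1$), one has $(n-1)\hbar(1,n-1)=(n-1)\sqrt{(n-1)^2+1}=\mathcal{HSO}(S_n)$, which settles the right inequality together with its equality case.

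For the lower bound the key computation is $\mathcal{HSO}(C_n)=n\sqrt2$, as each of the $n$ edges of $C_n$ joins two vertices of degree $2$. The natural tool is Proposition \ref{prop-path-cycle-minimal}, which gives $\mathcal{HSO}(G)\ge 2(\sqrt5-\sqrt2)n+(3\sqrt2-2\sqrt5)m$, but here lies the main obstacle: the coefficient $3\sqrt2-2\sqrt5$ of $m$ is negative, so this bound is decreasing in $m$ and equals $n\sqrt2$ only when $m=n$; for $m>n$ it drops below $n\sqrt2$ and is useless on its own. To get around this I would split according to whether or not $G$ is a tree. If $G$ is not a tree, then $m\ge n$, and Proposition \ref{prop-1}(i) gives the clean bound $\mathcal{HSO}(G)\ge\sqrt2\,m\ge\sqrt2\,n=\mathcal{HSO}(C_n)$; equality forces both $m=n$ and, by the regularity characterisation in Proposition \ref{prop-1}(i), that $G$ be regular, and a connected $2$-regular graph is precisely $C_n$. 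If instead $G$ is a tree, then $m=n-1$, and substituting this into Proposition \ref{prop-path-cycle-minimal} gives $\mathcal{HSO}(G)\ge n\sqrt2+(2\sqrt5-3\sqrt2)>n\sqrt2$, a strict inequality since $2\sqrt5-3\sqrt2>0$; hence no tree attains the minimum. Combining the two cases yields $\mathcal{HSO}(G)\ge\mathcal{HSO}(C_n)$ with equality if and only if $G=C_n$.

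Finally I would dispose of the boundary order $n=3$ directly, because Proposition \ref{prop-path-cycle-minimal} requires $n\ge4$: the only connected graphs are $P_3=S_3$ and $C_3$, for which $\mathcal{HSO}(C_3)=3\sqrt2<2\sqrt5=\mathcal{HSO}(S_3)$, so both inequalities and their equality cases hold trivially. The chief difficulty throughout is the lower bound's case split, forced by the sign of the $m$-coefficient in Proposition \ref{prop-path-cycle-minimal}; once that is handled, the equality analysis is routine.
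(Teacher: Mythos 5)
Your proposal is correct and follows essentially the same route as the paper: the upper bound via Theorem \ref{Deng-JCO-15-lem} combined with Lemma \ref{cor-Sn}, and the lower bound via the case split $m=n-1$ (handled by Proposition \ref{prop-path-cycle-minimal}) versus $m\ge n$ (handled by Proposition \ref{prop-1}(i)). The only cosmetic difference is that you substitute $m=n-1$ numerically to get $\mathcal{HSO}(G)\ge n\sqrt2+(2\sqrt5-3\sqrt2)$ for trees, whereas the paper phrases the same step as $\mathcal{HSO}(C_n)<\mathcal{HSO}(P_n)<\mathcal{HSO}(G)$.
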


\begin{proof}
The right inequality in \eqref{thm-Cn-Sn-eq-1} follows from Theorem \ref{Deng-JCO-15-lem} and Lemma \ref{cor-Sn}.

If $G \cong P_n$, then direct comparison yields $\mathcal{HSO}(C_n) <  \mathcal{HSO}(P_n)$ for $n\ge3$. In what follows, assume that $G \not\cong P_n$. We may assume that $n\ge4$. Let $m$ be the size of $G$. If $m=n-1$, then by Proposition \ref{prop-path-cycle-minimal}, we have $\mathcal{HSO}(C_n)<\mathcal{HSO}(P_n) <   \mathcal{HSO}(G)$. If $m\ge n$, then by Proposition \ref{prop-1}(i), we have $\mathcal{HSO}(C_n) \le   \mathcal{HSO}(G)$ with equality if and only if $G\cong C_n$.
\end{proof}

We also observe that the statement given in Theorem 5 of \cite{Barman-MATCH-26} is correct, but its proof seems to be imperfect; for instance, the proof contains the following text: ``It is obvious that the value of $\mathcal{HSO}(G)$ decreases when we remove edges from the graph $G$'', which contradicts Proposition \ref{prop-G+uv<G} (because $G$ can be obtained from $G+uv$ by removing the edge $uv$). Although this text does not play any role in the proof under consideration, one may find some ways to improve the rest of the mentioned proof.
However, we remark here that the statement given in Theorem 5 of \cite{Barman-MATCH-26} follows from Propositions \ref{prop-path-cycle-minimal} and \ref{thm-Cn-Sn}.

\section{Complementary Diminished Sombor Index}

Here, we provide results concerning the CDSO index similar to the ones given in the preceding section.
We start this section with the observation that $^c\mathcal{DSO}(G)\le \mathcal{HSO}(G),$
with equality if and only if $G$ is regular.

Corresponding to Proposition \ref{prop-1}(i), we provide the following result.

\begin{proposition}\label{prop-size-bound}
If $G$ is a connected graph of size $m\ (\ge1)$, then
$$^c\mathcal{DSO}(G) \le m\sqrt{2},$$
with equality if and only if $G$ is regular. In addition, if the maximum degree of $G$ is $\Delta$, then
$$^c\mathcal{DSO}(G) \ge \frac{m\sqrt{\Delta^2+1}}{\Delta},$$
with equality if and only if $G$ is $S_{m+1}$.
\end{proposition}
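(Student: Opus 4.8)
The plan is to isolate the contribution of a single edge and reduce both inequalities to an elementary monotonicity fact. For an edge $uv\in E(G)$, write $p=\min\{d(u),d(v)\}$ and $q=\max\{d(u),d(v)\}$; since $G$ is connected with at least one edge, every vertex has positive degree, so $1\le p\le q\le\Delta$. The key observation is that the corresponding summand factors as
$$
\frac{\sqrt{(d(u))^2+(d(v))^2}}{\max\{d(u),d(v)\}}=\frac{\sqrt{p^2+q^2}}{q}=\sqrt{1+\Big(\tfrac{p}{q}\Big)^2},
$$
a quantity that depends only on the ratio $p/q\in(0,1]$ and is strictly increasing in that ratio. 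Both bounds then amount to bounding this ratio from above and from below.

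For the upper bound I would use $p/q\le 1$, which gives $\sqrt{1+(p/q)^2}\le\sqrt 2$ on each edge; summing over the $m$ edges yields $^c\mathcal{DSO}(G)\le m\sqrt 2$. Equality forces $p/q=1$, i.e. $d(u)=d(v)$, on every edge. Because $G$ is connected, equality of degrees across each edge propagates along paths and forces all vertices to share a common degree, so equality holds precisely when $G$ is regular.

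For the lower bound I would instead use $p\ge 1$ and $q\le\Delta$, so that $p/q\ge 1/\Delta$ and hence $\sqrt{1+(p/q)^2}\ge\sqrt{1+\Delta^{-2}}=\Delta^{-1}\sqrt{\Delta^2+1}$ on each edge; summation gives the claimed bound $\Delta^{-1}m\sqrt{\Delta^2+1}$. The equality condition is $p/q=1/\Delta$ on every edge; combined with the constraints $p\ge 1$ and $q\le\Delta$, this forces $p=1$ and $q=\Delta$ on each edge (from $a=b/\Delta\le 1$ and $a\ge 1$).

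The main work lies in converting this edge-wise condition into the structural identification $G\cong S_{m+1}$, and I expect this to be the only non-routine step. Here I would argue that any vertex $c$ of degree $\Delta$ has all of its neighbors of degree $1$, since each incident edge requires its minimum degree to equal $1$. A leaf is adjacent to exactly one vertex, so the neighbor sets of distinct degree-$\Delta$ vertices are disjoint and cannot be shared; consequently the component containing $c$ consists of $c$ together with its $\Delta$ leaves. Connectedness then forces $G$ to be exactly this star, and counting edges gives $m=\Delta$, so $G\cong S_{m+1}$; conversely one checks directly that $S_{m+1}$ attains the bound. The two inequalities themselves follow immediately from the monotonicity of $\sqrt{1+t^2}$.
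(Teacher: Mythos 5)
Your proposal is correct and follows essentially the same route as the paper: both rewrite each edge contribution as $\sqrt{1+(p/q)^2}$ with $p=\min\{d(u),d(v)\}$, $q=\max\{d(u),d(v)\}$, squeeze the ratio $p/q$ between $1/\Delta$ and $1$, and sum over the $m$ edges. The only difference is that you spell out the equality analysis (degree propagation by connectedness for regularity, and the structural identification of the star) more explicitly than the paper does, which is a welcome addition rather than a deviation.
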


\begin{proof}
Let $uv$ be any edge of $G$ such that $d(u)\le d(v)$. Then, the desired conclusions follow from the fact that
\begin{equation}\label{eq-CDSO-2}
\sqrt{1+\left(\frac{1}{\Delta}\right)^2} \le  \sqrt{1+\left(\frac{d(u)}{d(v)}\right)^2} \le \sqrt{2},
\end{equation}
where the right equality in \eqref{eq-CDSO-2} holds if and only if $d(u)=d(v)$, and the left equality in \eqref{eq-CDSO-2} holds if and only if $(d(u),d(v))=(1,\Delta)$.
\end{proof}

The next result follows directly from Proposition \ref{prop-size-bound}.

\begin{corollary}\label{cor-cnctd-graphs}
If $G$ is a connected graph of order $n\ (\ge 3)$, then
\begin{equation}\label{eq-CDSO-cnctd}
\sqrt{(n-1)^2+1}\le\, ^c\mathcal{DSO}(G)   \le \frac{n(n-1)}{\sqrt{2}}
\end{equation}
with left (right, respectively) equality if and only if $G$ is the star graph $S_n$ (complete graph $K_n$, respectively).
\end{corollary}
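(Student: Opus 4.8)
The plan is to obtain both inequalities in \eqref{eq-CDSO-cnctd} directly from Proposition~\ref{prop-size-bound}, by feeding into its two size-dependent estimates the elementary structural facts that a connected graph $G$ of order $n$ satisfies $n-1\le m\le \binom{n}{2}=\tfrac{n(n-1)}{2}$ and has maximum degree $\Delta\le n-1$.

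For the right-hand bound I would invoke the first estimate ${}^c\mathcal{DSO}(G)\le m\sqrt{2}$ and substitute $m\le \tfrac{n(n-1)}{2}$ to reach ${}^c\mathcal{DSO}(G)\le \tfrac{n(n-1)}{\sqrt 2}$. Equality here forces $m=\binom{n}{2}$, i.e.\ $G\cong K_n$; since $K_n$ is regular, the equality clause of Proposition~\ref{prop-size-bound} is met simultaneously, so the right equality in \eqref{eq-CDSO-cnctd} characterises exactly the complete graph.

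For the left-hand bound I would start from the second estimate ${}^c\mathcal{DSO}(G)\ge m\,\Delta^{-1}\sqrt{\Delta^2+1}$. Writing $h(t)=t^{-1}\sqrt{t^2+1}=\sqrt{1+t^{-2}}$, which is strictly decreasing on $[1,\infty)$, and using $\Delta\le n-1$ together with $m\ge n-1$, I would chain
\[
\frac{m\sqrt{\Delta^2+1}}{\Delta}=m\,h(\Delta)\ge m\,h(n-1)\ge (n-1)\,h(n-1)=\sqrt{(n-1)^2+1}.
\]

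The (minor) main obstacle is verifying the compatibility of the equality cases along this chain. Equality in the estimate of Proposition~\ref{prop-size-bound} requires $G\cong S_{m+1}$; since $G$ has order $n$, this forces $m=n-1$ and $\Delta=n-1$, which are precisely the values making the two monotonicity steps $h(\Delta)\ge h(n-1)$ and $m\ge n-1$ tight. Hence all of the inequalities collapse to equalities exactly when $G\cong S_n$, yielding the stated left-hand equality condition.
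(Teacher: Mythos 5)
Your proposal is correct and follows essentially the same route as the paper, which simply states that the corollary follows directly from Proposition~\ref{prop-size-bound}; you have merely made explicit the substitutions $n-1\le m\le n(n-1)/2$ and $\Delta\le n-1$ and verified the equality cases. Nothing is missing.
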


Similar to parts (ii) and (iii) of Proposition \ref{prop-1}, we have the results presented in the following proposition.

\begin{proposition}\label{prop-1-CDSO}
Let $G$ be a connected graph of order at least $2$.
\begin{description}
    \item[(i).] If the minimum and maximum degrees of $G$ are  $\delta$ and  $\Delta$, respectively, then
\begin{equation}\label{eq-1-prop-2-CDSO-}
\frac{1}{\Delta}\mathcal{SO}(G)\le  \,^c\mathcal{DSO}(G) \le \frac{1}{\delta}\mathcal{SO}(G).
\end{equation}
The right equality in \eqref{eq-1-prop-2-CDSO-} holds if and only if $G$ is regular. The left equality in \eqref{eq-1-prop-2-CDSO-} holds if and only if every edge of $G$ is incident to at least one vertex of degree $\Delta$.
\item[(ii).] If $G$ has maximum degree $\Delta$, then
\begin{equation*}
^c\mathcal{DSO}(G) \ge \frac{1}{\sqrt{2}\Delta}\mathcal{M}_1(G),
\end{equation*}
with equality if and only if $G$ is regular.

\end{description}

\end{proposition}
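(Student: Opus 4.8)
The plan is to prove both parts by bounding each summand of $^c\mathcal{DSO}(G)$ edge by edge, exactly paralleling the treatment of Proposition \ref{prop-1}(ii)--(iii), with the roles of $\delta$ and $\Delta$ interchanged because the denominator here is $\max\{d(u),d(v)\}$ rather than $\min\{d(u),d(v)\}$.

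For part (i), I would start from the observation that for every edge $uv$ the quantity $\max\{d(u),d(v)\}$ is itself a vertex degree, so $\delta \le \max\{d(u),d(v)\} \le \Delta$, which gives $\tfrac{1}{\Delta} \le \tfrac{1}{\max\{d(u),d(v)\}} \le \tfrac{1}{\delta}$. Multiplying through by the positive factor $\sqrt{(d(u))^2+(d(v))^2}$ and summing over all edges yields the two desired bounds, since $\sum_{uv\in E(G)}\sqrt{(d(u))^2+(d(v))^2} = \mathcal{SO}(G)$. For the equality analysis, the right-hand bound forces $\max\{d(u),d(v)\}=\delta$ on every edge; because $\max\{d(u),d(v)\}\ge\min\{d(u),d(v)\}\ge\delta$, this compels $d(u)=d(v)=\delta$ on each edge, and connectivity then propagates a common degree to all vertices, so $G$ is regular. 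The left-hand bound forces $\max\{d(u),d(v)\}=\Delta$ on every edge, which is precisely the stated condition that each edge be incident with a vertex of degree $\Delta$.

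For part (ii), the key inequality is the elementary estimate $\sqrt{a^2+b^2}\ge (a+b)/\sqrt{2}$ (equivalently $2(a^2+b^2)\ge(a+b)^2$), valid for nonnegative $a,b$ with equality if and only if $a=b$. Applied to each edge together with $\max\{d(u),d(v)\}\le\Delta$, it gives
\[
\frac{\sqrt{(d(u))^2+(d(v))^2}}{\max\{d(u),d(v)\}}\ \ge\ \frac{d(u)+d(v)}{\sqrt{2}\,\max\{d(u),d(v)\}}\ \ge\ \frac{d(u)+d(v)}{\sqrt{2}\,\Delta}.
\]
Summing over all edges and using $\mathcal{M}_1(G)=\sum_{uv\in E(G)}(d(u)+d(v))$ produces the claimed lower bound. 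Equality requires both steps to be tight on every edge: the first forces $d(u)=d(v)$ and the second forces $\max\{d(u),d(v)\}=\Delta$, so $d(u)=d(v)=\Delta$ throughout; connectivity then gives regularity.

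Since every step reduces to a single-variable degree comparison, I do not anticipate a genuine obstacle. The only point requiring care is the equality discussion: one must invoke connectivity to pass from a local degree condition on each edge to a global regularity statement, and must keep distinct the two asymmetric characterizations in part (i), namely regularity for the upper bound versus the weaker ``$\Delta$-saturated edge'' condition for the lower bound.
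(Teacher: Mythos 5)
Your proof is correct, and it is exactly the termwise edge-by-edge argument that the paper intends: the paper states this proposition without proof, presenting it as the analogue of Proposition~\ref{prop-1}(ii)--(iii) with $\min$ replaced by $\max$. All steps check out, including the equality analyses (the identity $\mathcal{M}_1(G)=\sum_{uv\in E(G)}(d(u)+d(v))$ and the use of connectivity to pass from the local condition $d(u)=d(v)$ on every edge to global regularity).
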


Next, we provide two results, corresponding to Propositions \ref{prop-G+uv<G} and \ref{prop-G+uv>G}, concerning the difference $
 ^c\mathcal{DSO}(G) -\, ^c\mathcal{DSO}(G+uv)$.

\begin{proposition}\label{prop-G+uv<G-CDSO}
Let $u$ and $v$ be non-adjacent vertices of a graph $G$ such that $d(u)= d(u')=4d(v)=4d(v')\ge12$ for all $u'\in N(u)$ and $v'\in N(v)$. Then,
$$
^c\mathcal{DSO}(G+uv)
<\, ^c\mathcal{DSO}(G).
$$
\end{proposition}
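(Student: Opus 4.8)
The plan is to exploit the rigidity of the hypothesis and reduce everything to a single-variable inequality. Write $k:=d(v)$, so that $d(v')=k$ for every $v'\in N(v)$ and $d(u)=d(u')=4k$ for every $u'\in N(u)$; the constraint $d(u)\ge 12$ means $k\ge 3$. Put $\psi(x,y):=(\max\{x,y\})^{-1}\sqrt{x^2+y^2}$. First I would observe that $N(u)\cap N(v)=\varnothing$ (a common neighbor would need degree both $4k$ and $k$), so the edge sets $\{uu':u'\in N(u)\}$ and $\{vv':v'\in N(v)\}$ are disjoint and disjoint from the new edge $uv$. Adding $uv$ raises $d(u)$ to $4k+1$ and $d(v)$ to $k+1$ while leaving all neighbor degrees fixed, so only these edges change their contribution. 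Since $\psi(4k,4k)=\psi(k,k)=\sqrt2$, this gives the exact identity
\[
{}^c\mathcal{DSO}(G)-{}^c\mathcal{DSO}(G+uv)
= 4k\bigl[\sqrt2-\psi(4k+1,4k)\bigr]+k\bigl[\sqrt2-\psi(k+1,k)\bigr]-\psi(4k+1,k+1),
\]
and the goal becomes showing the right-hand side is positive for every integer $k\ge3$.

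Next I would handle the two positive brackets by rationalization, using for each integer $t\ge1$ the identity
\[
\sqrt2-\psi(t+1,t)=\frac{2t+1}{(t+1)\bigl(\sqrt2(t+1)+\sqrt{2t^2+2t+1}\bigr)}>0,
\]
which follows from $[\sqrt2(t+1)]^2-(2t^2+2t+1)=2t+1$. Applying $\sqrt{2t^2+2t+1}<\sqrt2(t+1)$ to bound the denominator from above yields the clean lower bounds
\[
4k\bigl[\sqrt2-\psi(4k+1,4k)\bigr]>\frac{\sqrt2\,k(8k+1)}{(4k+1)^2},\qquad
k\bigl[\sqrt2-\psi(k+1,k)\bigr]>\frac{\sqrt2\,k(2k+1)}{4(k+1)^2}.
\]
For the negative term, the estimate $\sqrt{17k^2+10k+2}\le\sqrt{17}\,k+\sqrt2$ (valid since $2\sqrt{34}>10$) gives $\psi(4k+1,k+1)\le(\sqrt{17}\,k+\sqrt2)/(4k+1)$. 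Combining the three bounds, it suffices to prove the single-variable inequality
\[
\frac{\sqrt2\,k(8k+1)}{(4k+1)^2}+\frac{\sqrt2\,k(2k+1)}{4(k+1)^2}\ge\frac{\sqrt{17}\,k+\sqrt2}{4k+1}\qquad(k\ge3).
\]

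Finally I would clear the positive denominator $4(4k+1)^2(k+1)^2$ and collect the $\sqrt2$- and $\sqrt{17}$-parts, reducing the claim to $\sqrt2\,P(k)\ge\sqrt{17}\,R(k)$ for explicit polynomials $P,R$ with $R(k)=4k(4k+1)(k+1)^2$. Since both sides are positive for $k\ge3$ (one checks $P(3)>0$ directly), squaring turns this into the polynomial inequality $2P(k)^2\ge17R(k)^2$, whose leading coefficient is positive; substituting $k=3+s$ with $s\ge0$ and verifying that the resulting polynomial in $s$ has nonnegative coefficients finishes the proof for all $k\ge3$. The main obstacle is that the margin is genuinely small at the boundary case $k=3$ (numerically the reduced inequality there holds by only about $0.03$), so the auxiliary bounds must be chosen tightly: crude estimates of $\psi(4k+1,k+1)$ or of the two brackets would fail at $k=3$, and the delicacy of this endpoint is precisely what forces the specific threshold $d(u)\ge12$ in the hypothesis.
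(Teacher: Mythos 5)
Your proposal is correct, and its opening move is exactly the paper's: both reduce $^c\mathcal{DSO}(G)-{}^c\mathcal{DSO}(G+uv)$ to the identity
\[
4k\bigl[\sqrt2-\phi(4k+1,4k)\bigr]+k\bigl[\sqrt2-\phi(k+1,k)\bigr]-\phi(4k+1,k+1),\qquad k:=d(v)\ge3,
\]
where $\phi(x,y)=(\max\{x,y\})^{-1}\sqrt{x^2+y^2}$. The difference is in what happens next: the paper simply declares this quantity positive ``because $d_G(v)\ge3$'' and stops, whereas you actually prove the positivity, via the rationalization $\sqrt2-\phi(t+1,t)=\tfrac{2t+1}{(t+1)(\sqrt2(t+1)+\sqrt{2t^2+2t+1})}$, the bound $\sqrt{17k^2+10k+2}\le\sqrt{17}\,k+\sqrt2$, and a final polynomial inequality $2P(k)^2\ge17R(k)^2$ handled by shifting $k=3+s$. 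I checked your intermediate bounds and the endpoint $k=3$ (where the reduced inequality holds by roughly $0.03$, and $2P(3)^2-17R(3)^2=7100306>0$), and the shifted polynomial does have all positive coefficients, so your argument closes the gap that the paper leaves open; your observation that the tightness at $k=3$ is what forces the threshold $d(u)\ge12$ is a worthwhile remark that the paper does not make. The one cosmetic point is that your aside about $N(u)\cap N(v)=\varnothing$ is not actually needed for the edge-by-edge accounting (the edges $uu'$ and $vv'$ are distinct regardless of common neighbours); it only records that the degree hypotheses are consistent.
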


\begin{proof}
Let
$\phi(x,y)= (\max\{x,y\})^{-1}\sqrt{x^2+y^2}$. Then, we have
\begin{align*}
&^c\mathcal{DSO}(G)-\, ^c\mathcal{DSO}(G+uv)\\[2mm]
&= 4d_G(v)[\phi(4d_G(v),4d_G(v))-\phi(4d_G(v)+1,4d_G(v))]\\[2mm]
&+ d_G(v)[\phi(d_G(v),d_G(v))-\phi(d_G(v)+1,d_G(v))] \\[2mm]
& - \phi(4d_G(v)+1,d_G(v)+1)>0,
\end{align*}
because $d_G(v)\ge3$.
\end{proof}

\begin{proposition}\label{prop-G+uv>G-CDSO}
Let $u$ and $v$ be non-adjacent vertices of a graph $G$ of minimum degree at least one such that $d(u)+1\le \min_{u'\in N(u)}\{d(u')\}$ and $d(v)+1\le \min_{v'\in N(v)}\{d(v')\}$. Then
$$
\mathcal{HSO}(G+uv)
> \mathcal{HSO}(G).
$$
\end{proposition}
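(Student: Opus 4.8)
The plan is to mirror the proof of Proposition \ref{prop-G+uv>G}, but with the kernel adapted to the CDSO index. (I read the conclusion as $^c\mathcal{DSO}(G+uv) > {}^c\mathcal{DSO}(G)$, since the statement sits in the CDSO section and is the exact analogue of Proposition \ref{prop-G+uv<G-CDSO}; the symbol $\mathcal{HSO}$ in the displayed inequality appears to be a typographical slip.) Set $\phi(x,y)=(\max\{x,y\})^{-1}\sqrt{x^2+y^2}$. Adding the edge $uv$ raises only $d(u)$ and $d(v)$ by one and leaves every other degree untouched, so only the edge contributions incident to $u$ or to $v$ change, together with the brand-new edge $uv$. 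First I would record the exact decomposition
\begin{align*}
{}^c\mathcal{DSO}(G)-{}^c\mathcal{DSO}(G+uv)
&=\sum_{u'\in N_G(u)}\big[\phi(d_G(u),d_G(u'))-\phi(d_G(u)+1,d_G(u'))\big]\\
&+\sum_{v'\in N_G(v)}\big[\phi(d_G(v),d_G(v'))-\phi(d_G(v)+1,d_G(v'))\big]\\
&-\phi(d_G(u)+1,d_G(v)+1),
\end{align*}
where one notes that no neighbor's degree is disturbed because $u'\neq v$ and $v'\neq u$ (as $u$ and $v$ are non-adjacent), so $d_{G+uv}(u')=d_G(u')$ and $d_{G+uv}(v')=d_G(v')$.

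The crux is the sign of each bracketed term, and this is where the hypothesis does all of its work. For a neighbor $u'\in N_G(u)$ the condition $d(u)+1\le\min_{u'\in N(u)}\{d(u')\}$ gives $d_G(u')\ge d_G(u)+1$, whence $\max\{d_G(u),d_G(u')\}=\max\{d_G(u)+1,d_G(u')\}=d_G(u')$. Thus the two $\phi$-values carry the \emph{same} denominator $d_G(u')$, and since the numerator $\sqrt{x^2+d_G(u')^2}$ is strictly increasing in $x>0$, raising the first argument from $d_G(u)$ to $d_G(u)+1$ strictly increases $\phi$; hence each bracket is strictly negative. The identical argument applies at $v$. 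The one delicate point—the step I would verify most carefully—is precisely this invariance of the denominator under the perturbation: it is exactly the assumption $d(u)+1\le d(u')$ that keeps the maximum pinned to the untouched degree $d_G(u')$ even after $d(u)$ increases by one. If this failed (for instance if some neighbor had $d(u')=d(u)$), the maximum would move into the growing argument and the monotonicity conclusion would no longer be clear.

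Finally I would assemble the signs: every bracket in the two sums is strictly negative, and the trailing term $-\phi(d_G(u)+1,d_G(v)+1)$ is strictly negative as well, so the entire right-hand side is negative, giving ${}^c\mathcal{DSO}(G)-{}^c\mathcal{DSO}(G+uv)<0$, i.e.\ ${}^c\mathcal{DSO}(G+uv)>{}^c\mathcal{DSO}(G)$, as required. It is worth flagging the structural contrast with the HSO setting of Proposition \ref{prop-G+uv>G}: there the denominator is a minimum, so one needs $u$ and $v$ to be local \emph{maxima} in degree; here the denominator is a maximum, so the roles invert and one instead wants $u$ and $v$ to lie strictly \emph{below} every neighbor, which is precisely what the degree conditions $d(u)+1\le\min_{u'\in N(u)}\{d(u')\}$ and $d(v)+1\le\min_{v'\in N(v)}\{d(v')\}$ encode.
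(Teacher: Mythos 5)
Your proof is correct and follows essentially the same route as the paper's: the identical edge-contribution decomposition with the kernel $\phi(x,y)=(\max\{x,y\})^{-1}\sqrt{x^2+y^2}$, concluding that every bracketed difference and the trailing term are negative. You additionally supply the justification the paper leaves implicit (the hypothesis $d(u)+1\le d(u')$ pins the maximum to $d(u')$, so the denominator is unchanged while the numerator grows), and you correctly read the displayed $\mathcal{HSO}$ as a typographical slip for $^c\mathcal{DSO}$.
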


\begin{proof}
Let
$\phi(x,y)= (\max\{x,y\})^{-1}\sqrt{x^2+y^2}$. Then, the required inequality holds because
\begin{align*}
&\mathcal{HSO}(G)-\mathcal{HSO}(G+uv)\\[2mm]
&= \sum_{u'\in N_G(u)}\bigg[\underbrace{\phi(d_G(u),d_G(u'))-\phi(d_G(u)+1,d_G(u'))}_{negative}\bigg]\\[2mm]
&+ \sum_{v'\in N_G(v)}\bigg[\underbrace{\phi(d_G(v),d_G(v'))-\phi(d_G(v)+1,d_G(v'))}_{negative}\bigg] \\
& - \phi(d_G(u)+1,d_G(v)+1) <0\,.
\end{align*}
\end{proof}

Next, using Lemma \ref{lem-m->n-1/n}, we have the following result.

\begin{proposition}\label{prop-path-cycle-maximal}
Let $G$ be a connected graph of order $n\ (\ge4)$ and size $m$. Then,
\begin{equation*}
^c\mathcal{DSO}(G) \le \left(\sqrt{5}-2\sqrt{2}\right)n + \left(3\sqrt{2}-\sqrt{5}\right)m,
\end{equation*}
with equality if and only if $G$ is either path graph $P_n$ or cycle graph $C_n$.
\end{proposition}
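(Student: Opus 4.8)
The plan is to apply Lemma \ref{lem-m->n-1/n} with the bond incident degree function of the CDSO index, namely $\hbar(r,s) = (\max\{r,s\})^{-1}\sqrt{r^2+s^2}$, which manifestly satisfies the hypotheses of the lemma (symmetric, nonnegative, and positive off the diagonal). First I would record $\hbar(1,2) = \sqrt{5}/2$ and $\hbar(2,2) = \sqrt{2}$ and verify that the claimed coefficients of $n$ and $m$ are precisely $2[\hbar(1,2)-\hbar(2,2)] = \sqrt{5} - 2\sqrt{2}$ and $3\hbar(2,2)-2\hbar(1,2) = 3\sqrt{2}-\sqrt{5}$. Since the desired estimate is an \emph{upper} bound, I would invoke the reversed conclusion of Lemma \ref{lem-m->n-1/n}; everything therefore reduces to showing that $\Phi(r,s) < 0$ for all integer pairs with $n-1 \ge s \ge r \ge 1$ and $(r,s) \notin \{(1,1),(1,2),(2,2)\}$, after which the equality characterization ($G \cong P_n$ or $G \cong C_n$) is inherited directly from the lemma.

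Using $s \ge r$ to write $\max\{r,s\} = s$, I would clear the common denominator exactly as in the proof of Proposition \ref{prop-path-cycle-minimal}, obtaining $\Phi(r,s) = (rs)^{-1}\Psi(r,s)$, where
\[
\Psi(r,s) := r\sqrt{r^2+s^2} + \left(\sqrt{5} - 3\sqrt{2}\right)rs + \left(2\sqrt{2}-\sqrt{5}\right)(r+s).
\]
Because $rs > 0$, it suffices to prove $\Psi(r,s) < 0$ on the relevant range. For the generic case $r \ge 2$, I would use the elementary bound $\sqrt{r^2+s^2} \le \sqrt{2}\,s$ (valid since $s \ge r$, with equality only when $r=s$) to obtain $\Psi(r,s) \le (\sqrt{5}-2\sqrt{2})(rs - r - s)$. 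Writing $rs-r-s = (r-1)(s-1)-1$ shows this quantity is nonnegative for $r,s \ge 2$, and since $\sqrt{5} - 2\sqrt{2} < 0$ the right-hand side is $\le 0$; when $r < s$ the first inequality is strict, while the borderline diagonal case $r = s \ge 3$ is treated by the direct evaluation $\Psi(s,s) = (\sqrt{5}-2\sqrt{2})\,s(s-2) < 0$.

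The main obstacle is the remaining case $r = 1$ (necessarily $s \ge 3$, since $(1,1)$ and $(1,2)$ are excluded), where the crude bound above is worthless: it only yields $\Psi(1,s) \le 2\sqrt{2} - \sqrt{5} > 0$, because $\sqrt{1+s^2}$ is far smaller than $\sqrt{2}\,s$. Here I would instead analyze the single-variable function $\Psi(1,s) = \sqrt{1+s^2} - \sqrt{2}\,s + (2\sqrt{2}-\sqrt{5})$ on its own: its derivative in $s$ equals $s/\sqrt{1+s^2} - \sqrt{2} < 0$, so it is strictly decreasing, and since $\Psi(1,2) = 0$ we conclude $\Psi(1,s) < 0$ for every integer $s \ge 3$. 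Combining the two cases gives $\Phi(r,s) < 0$ throughout the admissible range, so Lemma \ref{lem-m->n-1/n} delivers both the stated upper bound and the asserted equality condition.
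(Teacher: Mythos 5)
Your proposal is correct and follows exactly the route of the paper's proof: apply Lemma \ref{lem-m->n-1/n} with $\hbar(r,s)=(\max\{r,s\})^{-1}\sqrt{r^2+s^2}$ and show $\Phi(r,s)<0$ off the three exceptional pairs. The only difference is that the paper merely asserts ``we note that $\Phi(r,s)<0$'' without verification, whereas you actually prove it (the reduction to $\Psi$, the bound $\sqrt{r^2+s^2}\le\sqrt{2}\,s$ for $r\ge2$, and the monotonicity argument for $\Psi(1,s)$ are all checked and correct), so your write-up supplies the details the published proof omits.
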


\begin{proof}
By setting $\hbar(r,s)=s^{-1}\sqrt{r^2+s^2}$ in the definition of the function $\Phi$ given in Lemma \ref{lem-m->n-1/n}, we note that
$\Phi(r,s)<0$
 for all integers $r$ and $s$ satisfying $s\ge r\ge 1$ such that $(r,s)\not\in\{(1,1),(1,2),(2,2)\}$.
Therefore, the required result follows from Lemma \ref{lem-m->n-1/n}.
\end{proof}

\begin{proposition}\label{prop:trees-CDSO}
If $T$ is a tree of order $n\ (\ge 4)$, then
\begin{equation}\label{prop:trees-A1}
\sqrt{(n-1)^2+1}\le\, ^c\mathcal{DSO}(T)   \le \sqrt{5}+(n-3)\sqrt{2}
\end{equation}
with left (right, respectively) equality if and only if $T$ is the star graph $S_n$ (path graph $P_n$, respectively).

\end{proposition}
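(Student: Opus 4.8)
The plan is to derive both inequalities directly from the results already established for connected graphs, exploiting the single structural fact that a tree of order $n$ is precisely a connected graph with size $m = n-1$.

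For the lower bound, I would invoke Corollary \ref{cor-cnctd-graphs}. Every tree of order $n\ (\ge 4)$ is in particular a connected graph of order $n\ (\ge 3)$, so the corollary immediately gives $^c\mathcal{DSO}(T) \ge \sqrt{(n-1)^2+1}$ together with the characterization that equality holds iff $T \cong S_n$. The equality condition transfers with no loss, because the extremal graph $S_n$ is itself a tree; restricting the class from connected graphs to trees neither changes the minimum value nor adds new minimizers. Thus the left inequality in \eqref{prop:trees-A1} and its equality case are settled at once.

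For the upper bound, I would apply Proposition \ref{prop-path-cycle-maximal}, which states that $^c\mathcal{DSO}(T) \le (\sqrt{5}-2\sqrt{2})n + (3\sqrt{2}-\sqrt{5})m$ for every connected graph of order $n\ (\ge 4)$. Substituting the tree identity $m = n-1$ and collecting the coefficient of $n$ makes the $\sqrt{5}$ and $-\sqrt{5}$ contributions cancel, leaving a linear coefficient of $\sqrt{2}$, so the right-hand side reduces to
\[
(\sqrt{5}-2\sqrt{2})n + (3\sqrt{2}-\sqrt{5})(n-1) = \sqrt{2}\,(n-3) + \sqrt{5},
\]
which is exactly the claimed bound $\sqrt{5}+(n-3)\sqrt{2}$.

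The only step that demands genuine care — the closest thing to an obstacle — is the equality analysis for the upper bound. Proposition \ref{prop-path-cycle-maximal} attains equality iff the graph is isomorphic to $P_n$ or $C_n$. I would observe that $C_n$ has size $n$, not $n-1$, so it is not a tree and must be discarded; consequently, among trees the upper bound is attained exactly when $T \cong P_n$. As a consistency check I would confirm directly that $^c\mathcal{DSO}(P_n) = \sqrt{5} + (n-3)\sqrt{2}$, since the two pendant edges each contribute $\tfrac{1}{2}\sqrt{1^2+2^2} = \tfrac{\sqrt{5}}{2}$ and the $n-3$ internal edges each contribute $\tfrac{1}{2}\sqrt{2^2+2^2} = \sqrt{2}$, matching the bound and confirming sharpness.
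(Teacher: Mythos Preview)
Your proof is correct and follows exactly the same route as the paper: the lower bound is taken from Corollary~\ref{cor-cnctd-graphs} and the upper bound from Proposition~\ref{prop-path-cycle-maximal} with $m=n-1$. Your added details (the explicit substitution, the exclusion of $C_n$ in the equality case, and the direct verification for $P_n$) are all sound and simply flesh out what the paper leaves implicit.
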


\begin{proof}
The left (right, respectively) inequality in \eqref{prop:trees-A1} follows from Corollary \ref{cor-cnctd-graphs}  (Proposition \ref{prop-path-cycle-maximal}, respectively).
\end{proof}

\section{Concluding Remarks}
In this paper, we discussed some of the mathematical aspects of HSO and CDSO indices. Particularly, graphs maximizing/minimizing the aforementioned indices among the class of all fixed-order connected graphs/trees are known. We recall that trees can be considered connected graphs of cyclomatic number $0$, where the cyclomatic number of a graph is the minimum number of edges whose removal makes the graph acyclic. It is natural to think about the problem of characterizing graphs maximizing/minimizing either the HSO index or the CDSO index over the class of all fixed-order connected graphs with cyclomatic number $\ell\ (\ge1)$. We perform some numerical tests on connected graphs of small orders, which suggest that a graph minimizing (maximizing, respectively) the CDSO index (HSO index, respectively) over the aforementioned class of graphs has a vertex adjacent to all other vertices. Also, we believe that the minimum and maximum degrees of a graph maximizing (minimizing, respectively) the CDSO index (HSO index, respectively) over the class of graphs under consideration, with $\ell\ge2$, belong to the set $\{2,3\}$.

\section*{Acknowledgment}

This research has been funded by the Scientific Research Deanship at the University of Ha\!'il, Saudi Arabia, through project number RG-25\,045.

\footnotesize


\begin{thebibliography}{00}

\bibitem{Ali-MMN-22} A. Ali, K. C. Das, S. Akhter, On the extremal graphs for Second Zagreb Index with fixed number of vertices and cyclomatic number, {\it Miskolc Math. Notes} {\bf23} (2022) 41--50.

\bibitem{Ali-IJQC-17} A. Ali, Z. Du, On the difference between atom-bond connectivity index and Randic index of binary and chemical trees, {\it Int. J. Quantum Chem.} {\bf117} (2017) \#e25446.
\

\bibitem{Ali-AIMS-25} A. Ali, I. Gutman, B. Furtula, A. M. Albalahi, A. E. Hamza, On chemical and mathematical characteristics of generalized degree-based molecular descriptors, {\it AIMS Math.} {\bf10} (2025) 6788--6804. 

\bibitem{Ali-AIMS-2024} A. Ali, S. Sekar, S. Balachandran, S. Elumalai, A. M. Alanazi, T. S. Hassan, Y. Shang, Graphical edge-weight-function indices of trees, {\it AIMS Math.} {\bf9} (2024) 32552--32570.






\bibitem{Ali-IJACM-17} A. Ali, A. A. Bhatti, Z. Raza, Further inequalities between vertex-degree-based topological indices, {\it Int. J. Appl. Comput. Math.} {\bf3} (2017) 1921--1930.


\bibitem{Ali-CJC-16} A. Ali, Z. Raza, A. A. Bhatti, Extremal pentagonal chains with respect to bond incident degree indices, {\it Canad. J. Chem.} {\bf94} (2016) 870--876.


\bibitem{Barman-MATCH-26} J. Barman, S. Das, Geometric approach to degree-based topological index: hyperbolic Sombor index, {\it MATCH Commun. Math. Comput. Chem.} {\bf95} (2026) 63--94.


\bibitem{Basak-book} S. C. Basak (Ed.),
{\it Mathematical Descriptors of Molecules and Biomolecules:
Applications in Chemistry, Drug Design, Chemical Toxicology, and Computational Biology}, Springer, Cham, 2024.



\bibitem{Bondy-book} J. A. Bondy, U. S. R. Murty, {\it Graph Theory}, Springer, Heidelberg, 2008.


\bibitem{borovicanin17zagreb} B. Borovi\v{c}anin, K. C. Das, B. Furtula, I. Gutman, Bounds for Zagreb indices, {\it MATCH Commun. Math. Comput. Chem.\/} {\bf 78} (2017) 17--100.


\bibitem{Chartrand-16} G. Chartrand, L. Lesniak, P. Zhang, {\it Graphs} \& {\it Digraphs}, CRC Press, 2016.



\bibitem{chen24} M. Chen, Y. Zhu, Extremal unicyclic graphs of Sombor index, {\it Appl. Math. Comput.\/} {\bf 463} (2024) \#128374.


\bibitem{das24open} K. C. Das, Open problems on Sombor index of unicyclic and bicyclic graphs,
        {\it Appl. Math. Comput.\/} {\bf 473} (2024) \#128644.

\bibitem{KD1} K. C. Das, On the vertex degree function of graphs, {\it Comput. Appl. Math.\/} {\bf 44} (2025) \#183.


\bibitem{ASO-index} K. C. Das, I. Gutman, A. Ali, Augmented Sombor Index, {\it MATCH Commun. Math. Comput. Chem.\/}, in press.

\bibitem{Deng-JCO-15} H. Deng, G. Huang, X. Jiang, A unified linear-programming modeling of some topological indices, {\it J. Comb. Optim.} {\bf30} (2015) 826--837.

\bibitem{Desmecht-JCIM-24} D. Desmecht, V. Dubois, Correlation of the molecular cross-sectional area of organic monofunctional compounds with topological descriptors, {\it J. Chem. Inf. Model.} {\bf64} (2024) 3248--3259.

\bibitem{Furtula-MATCH-25} B. Furtula, M. S. Oz, Complementary topological indices, {\it MATCH Commun. Math. Comput. Chem.} {\bf93} (2025) 247--263.


\bibitem{gutman13degree} I. Gutman, Degree based topological indices, {\it Croat. Chem. Acta\/} {\bf 86} (2013) 351--361.

\bibitem{gutman21geo}I. Gutman, Geometric approach to degree-based topological indices: Sombor indices, {\it MATCH Commun. Math. Comput. Chem.\/} {\bf 86} (2021) 11--16.

\bibitem{gutman24eu}I. Gutman, Relating Sombor and Euler indices,
        {\it Military Tech. Courier\/} {\bf 71} (2024) 1--12.

\bibitem{gut04}I. Gutman, K. C. Das, The first Zagreb index 30 years after, {\it MATCH Commun. Math. Comput. Chem.\/} {\bf 50} (2004) 83--92.

\bibitem{gutman24elliptic} I. Gutman, B. Furtula, M. S. Oz, Geometric approach to vertex
        degree-based topological indices -- Elliptic Sombor index, theory and application,
        {\it Int. J. Quantum Chem.\/} {\bf 124} (2024) \#e27346.


\bibitem{Leite-24} L. S. G. Leite, S. Banerjee, Y. Wei, J. Elowitt, A. E. Clark, Modern chemical graph theory, {\it WIREs Comput. Mol. Sci.} {\bf14} (2024) \#e1729.


\bibitem{liu22review} H. Liu, I. Gutman, L. You, Y. Huang, Sombor index: review of extremal results and bounds, {\it J. Math. Chem.\/} {\bf 60} (2022) 771--798.


\bibitem{new11} Z. Li, L. Shi, L. Zhang, H. Ren, F. Li,
         On Sombor index of uniform hypergraphs, {\it MATCH Commun. Comput. Chem.\/}
         {\bf 94} (2025) 229--246.

\bibitem{new2} V. Maitreyi, S. Elumalai, S. Balachandran, On the
        extremal general Sombor index of trees with given pendent
        vertices, {\it MATCH Commun. Comput. Chem.\/} {\bf 92} (2024) 225--248.



\bibitem{Movahedi-26} F. Movahedi, I. Gutman, I. Red\v{z}epovi\'c, B. Furtula.
        Diminished Sombor index, {\it MATCH Commun. Comput. Chem.\/} {\bf 95}
        (2026) 141--162.

\bibitem{nithyaa24} P. Nithya, S. Elumalai, S. Balachandran, A. Ali, Z. Raza, A. A. Attiya, On unicyclic graphs with a given girth and their minimum symmetric division deg index, \textit{Discrete Math. Lett.} \textbf{13} (2024) 135--142.

\bibitem{new3} M. R. Oboudi, Sombor index of a graph and of its
        subgraphs, {\it MATCH Commun. Comput. Chem.\/} {\bf 92} (2024) 697--702.



\bibitem{new7} A. P. Tache, R. M. Tache, I. Stroe, Extremal unicyclic
        graphs for the Euler Sombor index, {\it MATCH Commun. Comput. Chem.\/}
        {\bf 94} (2025) 561--578.

\bibitem{Todeschini-20-book} R. Todeschini, V. Consonni, {\it Handbook of Molecular Descriptors}, Wiley-VCH, Weinheim, 2000.

\bibitem{Trina-book} N. Trinajsti\'c, {\it Chemical Graph Theory},  CRC Press, Boca Raton, 1992.



\bibitem{Wagner-18} S. Wagner, H. Wang, {\it Introduction to Chemical Graph Theory},
        CRC Press, Boca Raton, 2018.



\end{thebibliography}
\end{document}